\newlength{\defbaselineskip}
\newcommand{\setlinespacing}[1]%
           {\setlength{\baselineskip}{#1 \defbaselineskip}}
\theoremstyle{plain}
\newtheorem{thm}{Theorem}[section]
\newtheorem{prop}[thm]{Proposition}
\newtheorem{rem}[thm]{Remark}
\newtheorem{Def}[thm]{Definition}
\newcommand{\cc}{\mathbb{C}}
\newcommand{\bin}{\binom}
\newcommand{\w}{\widetilde}
\newcommand{\p}{\partial}
  \newcommand{\sss}{\textbf{s}}
\makeatletter\@addtoreset{equation}{section} \makeatother
\begin{document}

\textwidth=13.5cm
  \textheight=23cm
  \hoffset=-1cm

  \baselineskip=17pt
\title { Pascal  algebra  of matrices    and Pascal map on jet bundles}
\author{Li Chen}
\date{}
\maketitle {\textbf{}
\def\zz{\textbf{z}}
\def\ww{\textbf{w}}
\def\zzz{\textbf{z}_0}
\def\bg{\mbox{\boldmath$\gamma$}}
\def\ba{\mbox{\boldmath$a$}}
\def\bb{\mbox{\boldmath$b$}}
\def\bet{\mbox{\boldmath$\beta$}}
\def\M{\mathcal{M}}
\def\H{\mathcal{H}}

%
%
%
%
%
%
%
%
%

\textbf{Abstract:} We identify and study       a matrix    algebra consisting of Pascal-type matrices.  The generator of the matrix algebra is shown to well define a canonical bundle map, called the Pascal map on  jet bundles,   and we use it to give an intrinsic definition of point-wise contact between Hermitian vector bundles in terms of  unitary equivalence of the Pascal maps.

\textbf{
 {Key words}}:    Pascal matrix;   jet bundle; contact.

 \section{Introduction}
An interesting  topic in differential geometry is to formulate intrinsic concepts on vector bundles out of extrinsic ones for smooth maps(sub-manifolds)  into   ambient spaces. This note concerns    geometric notion  on  high order behaviours carried   by jet bundles.  We   formulate   a    canonical bundle map, called the Pascal map, on   jet bundles and exhibit its use  in the  extrinsic-intrinsic  transition by refining the extrinsic notion of point-wise contact between smooth maps into an intrinsic one that makes sense on  vector bundles.

  In Section 2 we identify and study  a matrix algebra called the \emph{Pascal algebra} as a preparation, which is of independent interest in  extending the classical Pascal matrix as well as   later works on  generalized Pascal matrices. In Section 3 we show  that the generator of the Pascal   algebra represents a well-defined bundle map, called the \emph{Pascal map},  on the jet bundle of a given vector bundle.  In Section 4 we use the Pascal map to give  an  intrinsic definition  of point-wise contact between Hermitian vector bundles. Both Section 3 and Section 4 are confined in the setting of vector bundles over a domain in $\cc$, and in Section 5 we outline a several variable extension which works with domains in $\cc^m$, $m>1$.
 \section{Pascal   algebra}

Given a fixed positive integer $n$, we denote by   $\Lambda^n$ the set of  $(n+1)\times (n+1)$ matrices of the following form

\begin{equation}\label{pasc}
\left(\begin{array}{cccccc}{a_0}  & { }  &{ } & { } & { }\\ {a_1} & {a_0}  &  { } & {} & { }\\{a_2} & {2a_1} &  {a_0} & { } & { }\\{a_3} & {3a_2} &  {3a_1} & {a_0 } & { }\\  {\vdots} &{\vdots }  & { } & {}& {\ddots}& { } \\{a_n} & {\bin n 1 a_{n-1}}  &{\cdots} &{\cdots} & {\bin n {n-1}a_1}& {a_0 } \end{array}\right). \end{equation}  That is,  a  matrix  in $\Lambda^n$ is determined by  $n+1$ complex numbers   $a_0,a_1,\cdots a_n$ lying   in its first column,   whose $(i,j)$ entry is $\bin {i-1} {j-1} a_{i-j}$ if  $1\leq j\leq i\leq n+1$ and $0$ if $j>i$.

 Setting $a_0=a_1= \cdots =a_n=1$ yields the classical  Pascal matrix,    the study of which  starts from Call and Velleman \cite{CV}. Letting $a_n$   vary with $n$  according to different  rules(for instance, set $a_n=x^n$ for a indeterminate $x$),  one gets various  versions of ``generalized Pascal matrices"(\cite{BP,BT,YM,ZL,ZW}). The class  $\Lambda^n$ we identified above allows arbitrary first column entries hence  is the  ``biggest" class.

 \bigskip

\begin{thm}\label{pascal} Let
  $P$ be the matrix in $\Lambda^n$ whose first column is given by $a_1=1$ and $a_k=0,$  $k\neq 1$, that is,

\begin{equation}\label{21}
P=\left(\begin{array}{cccccc} {0}  & { }  &{ } & { } & { }\\ { 1} & { 0}  &  { } & {} & { }\\{0} & {2 } &  { 0} & { } & { }\\{0} & {0} &  {3 } & {0} & { }\\  {\vdots} &{\vdots }  & { } & {}& {\ddots}& { } \\{0} & {  0 }  &{\cdots} &{\cdots} & {n}& { 0 } \end{array}\right),\end{equation}  then $\Lambda^n$ is the commutant of $P$.

\end{thm}

\begin{proof} To see $\Lambda^n\subseteq P'$,  let $Q$ be in  $\Lambda^n$ and $(a_0,a_1,\cdots, a_n)^T$ be its first column, then both  $PQ$ and $QP$ are  lower triangular with zero diagonals, and it remains to compare their $(i,j)$ entries  for $j\leq i-1$. Observing that  the $(i,j)$ entry of $QP$
is $j\bin {i-1}{j}a_{i-j-1}$ and  the corresponding entry of $PQ$ is $(i-1)\bin {i-2}{j-1}a_{i-j-1}$,   the conclusion follows from the elementary identity $j\bin {i-1}{j}=(i-1)\bin {i-2}{j-1}$.

For the other direction,  let $Q$ be any $(n+1)\times(n+1)$ matrix  such that $PQ=QP$, and we have to  show $Q\in\Lambda^n$. To this end,    we  view $P$ and $Q$  as linear maps acting   on an $n+1$ dimensional space with a fixed base   $\{s_0,s_1,\cdots s_n\}$. Then $P$ corresponds to the action $Ps_0=0$ and $Ps_k=ks_{k-1}$,  $k=1,2,\cdots ,n$. Moreover,  if  $(a_0,a_1,\cdots, a_n)^T$ is the first column of $Q$, then  for every $0\leq k\leq n$,  $Qs_k=a_ks_0+$ other terms involving  $ s_1,\cdots s_n$.

Now it suffices to show if $PQ=QP$, then
\begin{equation}\label{mb}Qs_k=\sum_{i=0}^k \bin k ia_{k-i}s_i, k=0,1\cdots n \end{equation}

 Comparing  $PQs_0$ and $QPs_0$ one immediately sees that $Qs_0$ only has $s_0$ component, hence  (\ref{mb}) holds  for $k=0$
  and we  verify (\ref{mb}) by induction on $k$. Precisely, it suffices to show
\begin{equation}\label{mbz}Qs_{k+1}=a_{k+1}s_0+\sum_{i=1}^{k+1} \bin {k+1} ia_{k+1-i}s_i\end{equation} assuming (\ref{mb}).

To this end, write
$$Qs_{k+1}=a_{k+1}s_0+\sum_{i=1}^{n}b_is_i$$ for some coefficients $b_1,\cdots,b_{n}$, thus
$ PQs_{k+1}=\sum_{i=1}^{n}ib_is_{i-1}.$  While by (\ref{mb}) we have
$$QPs_{k+1}=(k+1)Qs_k=(k+1)\sum_{i=0}^k \bin k ia_{k-i}s_i=(k+1)\sum_{i=1}^{k+1} \bin k {i-1}a_{k+1-i}s_{i-1}.$$ Now $PQ=QP$ implies $b_i=0$ for $k+2\leq i\leq n$ and $b_i=\frac{(k+1)\bin k {i-1}a_{k+1-i}}{i}=\bin {k+1} ia_{k+1-i}$ for $1\leq i\leq k+1$, which gives (\ref{mbz}) as desired.

%

\end{proof}
An immediate consequence of Theorem \ref{pascal} is that $\Lambda^n$ is an algebra, which justifies the following

\begin{Def}The collection $\Lambda^n$  of   lower triangular matrices of the form (\ref{pasc}) is called the \textbf{Pascal algebra}.\end{Def}
\begin{rem} An alternative way to prove Theorem \ref{pascal} is to show both $\Lambda^n$ and $P'$ equals the algebra of polynomials in $P$(so $P$ is the generator of $\Lambda^n$) which involves arguments with minimal polynomials.  The proof we present above is a straightforward ``entry comparing" and is   easily seen to work in the setting of block matrices.
 Precisely,
replacing the scalers $a_0,\cdots, a_n$ in (\ref{pasc}) by $l\times l$ matrices $A_0,\cdots A_n$,  one gets a collection of  $(n+1)\times (n+1)$ block matrices. The set of these block matrices is still denoted by $\Lambda^n$,   which equals  the commutant of
 \begin{equation}\label{22}
P=\left(\begin{array}{cccccc} {0}  & { }  &{ } & { } & { }\\ { I} & { 0}  &  { } & {} & { }\\
{0} & {2I } &  { 0} & { } & { }\\{0} & {0} &  {3I } & {0} & { }\\  {\vdots} &{\vdots }  & { } & {}& {\ddots}& { } \\
{0} & {  0 }  &{\cdots} &{\cdots} & {nI}& { 0 } \end{array}\right).\end{equation}
Later, $\Lambda^n\subseteq P'$ will be needed in Theorem \ref{pascc} and $P'\subseteq \Lambda^n$ will be needed in Proposition \ref{contin}.



\end{rem}
 \section{Pascal map on jet bundles}
 In Section 3.1 we give a brief introduction  on  jet bundles    and  in Section 3.2    we  introduce   a canonical  bundle map, called the \emph{Pascal map} on the jet bundles.
 \subsection{Jet bundle}

To   quickly  reveal  the   conceptual idea and make this note self-contained,    we   confine ourselves in  an  specific  setting  by focusing on      vector bundles     associated to   holomorphic   maps  from a domain in $\cc$   into a  complex Grassmannian, which is elementary and facilitates our presentations in  Section 4 as well. Readers familiar with abstract theory of jet bundles will find it routine to extend   discussions in this paper  to more  general settings(see Remark \ref{mafan}).

Let
  $Gr(l,\mathcal{H})$  be  the Grassmannian of $l$ dimensional subspaces in a complex vector space $\mathcal{H}$ and $f$  be   a   map  from      a domain $\Omega\subseteq\cc$   to $Gr(l,\mathcal{H})$. The map $f$  is     holomorphic in the sense  that
    for any point $ z_0$ in $\Omega$, there
 exists a neighborhood $\Delta$ of $ z_0$ and    holomorphic  $\mathcal {H}$-valued functions $s_1, \cdots,
 s_l$   on  $\Delta$ such that $$f( z)=\bigvee\{s_1( z), \cdots,
 s_l( z)\}$$ for every $ z\in\Delta$.
 The  vector bundle
 $$E:=\{(h,z)\in \mathcal {H}\times \Omega|h\in f(z)\}   $$ associated to $f$     is  then a holomorphic   vector bundle over $\Omega$ of rank $l$ and $\sss=\{s_1, \cdots,
 s_l\}$ implements a local holomorphic frame of $E$.

Fix   a positive integer $n$ and a point $z\in\Omega$, we set  \begin{equation}\label{jetf}E^n(z):=\bigvee_{1\leq i\leq l, 0\leq k\leq n} \{s_i^{(k)}(z)\},  \end{equation}    where $\textbf{s}=\{s_1, \cdots,
 s_l\}$ is any holomorphic frame of $E$ around $z$.

 \bigskip

\begin{Def}{The vector space  $E^n(z)$ does not depend on choice of $\sss$  hence $$E^n:=\{(h,z)\in H\times \Omega|h\in E^n(z)\}, $$ is a well-defined vector bundle, called the \textbf{$n$-jet bundle }of $E$. }\end{Def}
\bigskip

In fact, suppose  $\textbf{t}=\{t_1,\cdots t_l\}$ is another  holomorphic frame around $z$, then there is an invertible holomorphic  matrix function $A$ such that $\textbf{s}=A\textbf{t}$(here $\sss$ and $\textbf{t}$ are interpreted as column vectors so $A$ acts from the left). A   differentiation gives
\begin{equation}\label{trsf}\left(\begin{array}{c}{\textbf{s}}  \\ {\textbf{s}'} \\{\textbf{s}''}\\  {\vdots}  \\{\textbf{s}^{(n)}} \end{array}\right)=
\left(\begin{array}{cccccc}{A}  & { }  &{ } & { } & { }\\ {A'} & {A}  &  { } & {} & { }\\{A''} & {2A'} &  {A} & { } & { }\\  {\vdots} &{\vdots }  & { } & {}& {\ddots}& { } \\{A^{(n)}} & {\bin n 1 A^{(n-1)}}  &{\cdots} &{\cdots} & {\bin n {n-1}A'}& {A} \end{array}\right)\left(\begin{array}{c}{\textbf{t}}  \\ {\textbf{t}'} \\{\textbf{t}''}\\  {\vdots}  \\{\textbf{t}^{(n)}} \end{array}\right),\end{equation} hence  $\bigvee_{1\leq i\leq l, 0\leq k\leq n} \{s_i^{(k)}(z)\}$ and $\bigvee_{1\leq i\leq l, 0\leq k\leq n} \{t_i^{(k)}(z)\}$ is the same vector space as they just differs by an invertible block matrix.

The high order derivatives
 $\{s_i^{(k)}(z), 1\leq i\leq l, 0\leq k\leq n\}$(or $\{\sss,\sss'\cdots \sss^{(n)}\}$ for short) implements  a canonical local holomorphic frame for $E^n$.  An important fact as one immediately finds is that  the transition matrix (\ref{trsf}) between two such  frames   lies in the Pascal algebra $\Lambda^n$, which    is determined by its upper block   $A$   and  we   denote it  by $\Lambda_A^n$ in the sequel.
\subsection{Pascal map}
Now we are ready to introduce the  Pascal bundle  map  on $E^n$. Recall that a bundle map on a vector bundle maps each fiber linearly to itself,  and  given a collection of   frames $\{\sss_\alpha\}$ as local trivializations, the standard way to construct an unambiguously defined   bundle map $\Phi$  is to give a collection of matrix functions $\{\Phi(\sss_\alpha)\}$   such that the compatibility condition
\begin{equation}\label{mmmmcx}\Phi(\sss_\alpha)=A_{\alpha\beta}\Phi(\sss_\beta)A_{\alpha\beta}^{-1}\end{equation} holds, where $A_{\alpha\beta}$ is the transition function between $\sss_\alpha$ and $\sss_\beta$(so different matrices represents the same linear map). The following theorem asserts that on  the $n$-jet bundle of a holomorphic vector bundle,   a single constant matrix   will do, which gives the promised Pascal map on $E^n$.

\begin{thm}\label{pascc}Let $E$ be a holomorphic vector bundle over $\Omega\subseteq\cc$ and $n$ be a positive integer, the constant block  matrix (\ref{22})
 represents  a well defined bundle map,  called the \textbf{Pascal map},    on $E^n$.
\end{thm}

\begin{proof}
From the construction of $E^n$  one sees that if a collection of  frames  $\{\sss_\alpha\}$ gives   local trivializations for $E$, then $\{\textbf{s}_\alpha, \textbf{s}'_\alpha   \cdots, \textbf{s}^{(n)}_\alpha \}$ gives local trivializations for $E^n$. If $\textbf{s}$ and $\textbf{t}$  are any two overlapping holomorphic  frames of $E$ with transition matrix $A$, then the transition matrix for $\{\sss,\sss'\cdots \sss^{(n)}\}$ and $\{\textbf{t},\textbf{t}'\cdots \textbf{t}^{(n)}\}$ is $\Lambda_A^n$(see (\ref{trsf}) above). Now it suffices to verify the compatibility condition
  $P=\Lambda_A^nP(\Lambda_A^n)^{-1}$, or equivalently,  $P\Lambda_A^n=\Lambda_A^nP$.  As $\Lambda_A^n$ lies in the Pascal algebra,  this follows from  the block matrix version of Theorem \ref{pascal}.

\end{proof}

%
Explicitly, for  \emph{any } holomorphic frame $\textbf{s}=\{s_1,\cdots s_l\}$ of $E$, the Pascal map(still denoted by $P$)  acts on  the local frame  $\{\textbf{s}(z), \textbf{s}'(z),\cdots \textbf{s}^{(n)}(z)\}$ of $E^n$ by
\begin{equation}\label{pori} Ps_i^{(k)}=ks_i^{(k-1)}, 1\leq k\leq n, \quad \mathrm{and} \quad Ps_i=0\end{equation} for all $1\leq i\leq l$.
\begin{rem}\label{mafan}On a general holomorphic vector bundle
  where it is not so straightforward  to make sense of  high order derivative  of its sections,     $E^n$  is standardly  defined  via transition functions with compatibility conditions. Precisely, let $\{U_\alpha\}$ be  local trivializations  of $E$ and  $\{A_{\alpha\beta}\}_{U_\alpha\cap U_\beta\neq\emptyset}$ be the   corresponding set of  transition matrix functions,  then it is easy to  check that
  the matrix functions   $\{\Lambda^n_{A_{\alpha\beta}}\}_{U_\alpha\cap U_\beta\neq\emptyset}$   satisfy the compatibility condition $\Lambda^n_{A_{\alpha\beta}}\Lambda^n_{A_{\beta\gamma}}\Lambda^n_{A_{\gamma\alpha }}=I$ when $U_\alpha\cap U_\beta\cap U_\gamma\neq\emptyset$ hence well defines a vector bundle which is the     $n$-jet bundle $E^n$. In particular, our construction of the Pascal map above remains valid since it only depends on the fact that $\Lambda^n_{A_{\alpha\beta}}$ commutes with (\ref{22}).  \end{rem}
\bigskip

Like many familiar    geometric notions(such as   curvature or torsion),  the intrinsically defined  Pascal map   also has  an ``extrinsic predecessor".     Precisely, if      there exists   a bounded linear operator $T$ on the   ambient space $\mathcal {H}$  extending  ``coordinate   multiplication" on $E$, that is,
$Ts_i(z)=zs_i(z), 1\leq i\leq l$, then differentiating  it  $k$ times yields
$(T-z)s_i^{(k)}(z)= ks_i^{(k-1)}(z), $ which is  exactly the action     (\ref{pori}) of the Pascal map. Boundedness of $T$   validating  this differentiation argument  is a nontrivial extrinsic  condition and this operator is historically  called the Cowen-Douglas operator\cite{CD}. Our study in the next section will involve  unitary equivalence of Pascal maps,  which also has an  extrinsic counterpart as  unitary equivalence of  operators restricted to generalized eigen-spaces(see Sec 2.\cite{CD}),   hence  our study   can be somehow regarded as an ``intrinsic Cowen-Douglas theory".

\section{Contact   between vector bundles}
Contact  order  between smooths maps    is a useful extrinsic invariant. Historically it   implements a universal criteria  for the   congruence   problem of determining if two maps can be identified up to a rigid motion in the ambient spaces(Sec.5 \cite{Gri}). In particular,  the congruence problem for holomorphic maps into complex  Grassmannians had been extensively studied  over years(\cite{CZ,Gri,JP1,JP2}).
 In this section we   refine this classical notion  into an frame-independent  version  so that it makes sense on the associated vector bundles.
 To make sense of ``rigid motion",  we   assume in this section  that the ambient space $\mathcal{H}$ admits an inner product(Hilbert space) and  correspondingly $E$ and $E^n$  are Hermitian vector bundles.

Given two holomorphic maps $f$ and $\w f$ from a domain $\Omega\subseteq\cc$ into $Gr(l, \mathcal {H})$,   point-wise contact between them are defined to be    order $n$ agreement    up to a  rigid motion(isometry) of the ambient space(see Sec.5,\cite{Gri} or Sec.2,\cite{CD}):
 \begin{Def}\label{cont}Let $\mathcal {H}$ be a Hilbert space.
 Two holomorphic maps $f$ and $\w f$ from a domain $\Omega\subseteq\cc$ into $Gr(l, \mathcal {H})$  are said to have {contact of order $n$} at a point  $z_0$ if there exist holomorphic  frames $\textbf{s}=\{s_1, \cdots,
 s_l\}$ and $\widetilde{\textbf{s}}=\{\w s_1, \cdots,
 \w s_l\}$ for $f$ and $\w f$ around $z_0$   such that the linear map  defined by
$$s_i^{(k)}(z_0)\mapsto \w s_i^{(k)}(z_0), 1\leq i\leq l, 0\leq k\leq n$$ is isometric.\end{Def}

In above definition one have  to assume that the holomorphic maps are \textbf{$n$-nondegenerate}, that is,  $  \{s_i^{(k)}(z),1\leq i\leq l, 0\leq k\leq n\}$ and $\{\w s_i^{(k)}(z),1\leq i\leq l, 0\leq k\leq n\}$  are    linearly independent sets. We keep this assumption in the sequel.

\begin{prop}\label{contin}Let $\mathcal {H}$ be a Hilbert space. Let  $f$ and $\w f$  be holomorphic maps   from $\Omega\subseteq\cc$ to $Gr(l, \mathcal{H})$ with associated  holomorphic Hermitian  vector bundles  $E$ and $\w E$. Fix  a point   $z_0$  in $\Omega$, the followings are equivalent:

(i) $f$ and $\w f$ have contact of order $n$ at $z_0.$


 (ii) There is a linear isometric map   $\Phi$ from $E^n(z_0)$ to $\w E^n(z_0)$ such that $\Phi P=\w P\Phi$, where $P$ and $\w P$ are Pascal maps on  $E^n$ and $\w E^n$.

%
%

\end{prop}

\begin{proof}

  (i)$\Rightarrow$(ii)Let  $\Phi$ be the  isometric linear map  as in Definition \ref{cont}, then   $\Phi P=\w P\Phi$ holds trivially since $P$ and $\w P$ are both represented by (\ref{22}) while $\Phi$ is represented by   the identity matrix.

(ii)$\Rightarrow$(i)  Fix holomorphic frames $\sss$ and $\w\sss$ for $E$ and $\w E$,     $\Phi P=\w P\Phi$  implies that  the  representing matrix of $\Phi$ commutes with (\ref{22}) with respect to  $\{\textbf{s}(z_0), \textbf{s}'(z_0)   \cdots, \textbf{s}^{(n)} (z_0)\}$ and $\{ \widetilde{\textbf{s}}(z_0), \widetilde{\textbf{s}}'(z_0)   \cdots,  \widetilde{\textbf{s}}^{(n)}(z_0) \}$.   By Theorem \ref{pascal},  the matrix has to be in $\Lambda^n$ hence
 there exists matrices $A_0, A_1,\cdots A_n$ such that   the linear isometric   map  $\Phi$ is represented  by

 $$ \left(\begin{array}{c}{\textbf{s}(z_0)}  \\ {\textbf{s}'(z_0)} \\{\textbf{s}''(z_0)}\\  {\vdots}  \\{\textbf{s}^{(n)}(z_0)} \end{array}\right)\mapsto
\left(\begin{array}{cccccc}{A_0}  & { }  &{ } & { } & { }\\ {A_1} & {A_0}  &  { } & {} & { }\\{A_2} & {2A_1} &  {A_0} & { } & { }\\  {\vdots} &{\vdots }  & { } & {}& {\ddots}& { } \\{A_{n}} & {\bin n 1 A_{n-1}}  &{\cdots} &{\cdots} & {\bin n {n-1}A_1}& {A_0} \end{array}\right)\left(\begin{array}{c}{\widetilde{\textbf{s}}(z_0)}  \\ {\widetilde{\textbf{s}}'(z_0)} \\{\widetilde{\textbf{s}}''(z_0)}\\  {\vdots}  \\{\widetilde{\textbf{s}}^{(n)}(z_0)} \end{array}\right).$$
     Set $A(z)=\sum_{k=0}^n \frac{1}{k!}(z-z_0)^kA_k$, then  $A(z)$ is a holomorphic matrix function around $z_0$ with $A^{(k)}(z_0)=A_k$,  $k=0,1\cdots n$. Let $\widetilde{\textbf{t}}=A(z)\widetilde{\textbf{s}}$, then   $\widetilde{\textbf{t}}$ is also a holomorphic frame for $\w E$(over a sufficiently small neighborhood of $z_0$) and the frames $ {\textbf{s}} $  and $\widetilde{\textbf{t}}$  meets Definition \ref{cont}.

\end{proof}

Now we see that condition $(ii)$ of  Proposition \ref{contin} as a frame-independent criteria        is compatible with the original extrinsic Definition \ref{cont}, hence   is eligible to be the intrinsic condition that  defines point-wise contact between holomorphic Hermitian vector bundles over  a one dimensional domain:

\begin{Def}\label{defcon}Two   holomorphic Hermitian vector bundles $E$ and $\w E$ over $\Omega\subseteq\cc$  are said to have contact of order $n$ at a point $z_0$ if  there is a linear isometric map   $\Phi$ from $E^n(z_0)$ to $\w E^n(z_0)$ such that $\Phi P=\w P\Phi$, where $P$ and $\w P$ are Pascal maps on  $E^n$ and $\w E^n$.\end{Def}
 Similar to Remark \ref{mafan}, in Definition \ref{defcon} one do not need to assume the vector bundles are associated to holomorphic maps. In fact,  if $E$   a general Hermitian vector bundle where   $\{H_\alpha\}$ is the Gram matrix for the local frame on $U_\alpha$, then one can check that $\{[\p^p\overline{\p^q} H_\alpha]_{0\leq p,q\leq n}\}$ also glue to a well defined Hermitian form on $E^n$ hence it makes sense to talk about isometric bundle maps on the jet bundles.

\section{Several variable case}
In  this section we outline   how to extend discussions in Section 3 and  Section 4  with  $\Omega\subseteq\cc^m$, $m>1$. Such a several variable extension   is not trivial but  given the idea in previous sections on  $m=1$, this is essentially  a  technical work and we omit the details.

We begin with jet bundles and the Pascal map. Fix a   point $\zz=(z_1,\cdots z_m)\in\Omega$ and a  holomorphic frame $ {\textbf{s}}=\{s_1\cdots s_l\} $ of $E$ around $\zz$, set

\begin{equation}\label{jetm}E^n(\zz):=\bigvee_{1\leq i\leq l, 0\leq |I|\leq n}\partial^Is_i(\zz), \end{equation}
where $I=(i_1,\cdots i_m)$ is an multi-index and $|I|=i_1+\cdots+i_m$.

 For any $1\leq k\leq m$, we  define  a linear map on $E^n(\zz)$  by

 \begin{equation}\label{diffe} \partial^Is_i(\zz)\mapsto \left\{\begin{array}{l}{i_k\partial_{z_1}^{i_1}\cdots\partial_{z_k}^{i_k-1}\cdots\partial_{z_m}^{i_m}s_i(\zz), i_k\geq 1} \\ {0, \quad\quad  {i_k=0}}\end{array}\right.\end{equation}
where $\partial^I=\partial_{z_1}^{i_1}\cdots\partial_{z_m}^{i_m}, 1\leq i\leq l$.

One need  to check two issues:

 $(i)$ if $ {\textbf{t}}=\{t_1,\cdots t_l\}$ is another holomorphic frame of $E$ around $\zz$, then
 $\bigvee_{1\leq i\leq l, 0\leq |I|\leq n}\partial^Is_i(\zz)$ and $\bigvee_{1\leq i\leq l, 0\leq |I|\leq n}\partial^It_i(\zz)$  is the same vector space so     the $n$-jet bundle $E^n$ of $E$ is well-defined;

 $(ii)$ with respect to either
 $\{\partial^Is_i(\zz), 1\leq i\leq l, 0\leq |I|\leq n\}$  or $\{\partial^It_i(\zz), 1\leq i\leq l, 0\leq |I|\leq n\}$, the rule (\ref{diffe}) represents the same linear map on $E^n(\zz)$, which is the several varaible analogue of Theorem \ref{pascc}.

 With these issues checked, one sees that for every $1\leq k\leq m$, (\ref{diffe}) gives a well defined bundle map, called the \textbf{$\textbf{k}$-th Pascal map} on $E^n$(denoted by $P_k$).
 \bigskip

Finally, one can prove the following analogue  of        Proposition \ref{contin} where condition $(ii)$ implements the intrinsic definition of  point-wise contact between Hermitian holomorphic vector bundles in the several variable case.
 \begin{prop}\label{continss}Let $\mathcal {H}$ be a Hilbert space,   $f$ and $\w f$  be holomorphic maps   from $\Omega\subseteq\cc^m$ to $Gr(l, \mathcal{H})$  with associated    holomorphic Hermitian  vector bundles   $E$ and $\w E$. The followings are equivalent

(i)  $f$ and $\w f$ have contact of order $n$ at $z_0,$ that is, there exists holomorphic frames $\textbf{s}=\{s_1,\cdots s_l\}$ and  $\widetilde{\textbf{s}}=\{\w s_1,\cdots \w s_l\}$   around $\zz_0$ such that the linear map defined by $$\partial^Is_i(\zz_0)\mapsto \partial^I\w s_i (\zz_0), 1\leq i\leq l, 0\leq |I|\leq n$$ is isometric.

(ii)  There is a linear isometric map   $\Phi$ from $E^n(\zz_0)$ to $\w E^n(\zz_0)$ such that $\Phi P_k=\w P_k\Phi$ for all  $1\leq k\leq m$, where $P_k$ and $\w P_k$ are $k$-th Pascal maps on  $E^n$ and $\w E^n$.

  \end{prop}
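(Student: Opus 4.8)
The plan is to mirror the one-variable argument of Proposition~\ref{contin}, the only structural change being that the single Pascal algebra $\Lambda^n$ must be replaced by the \emph{joint} commutant of the $m$ Pascal maps $P_1,\dots,P_m$. Throughout I write $\epsilon_k$ for the $k$-th unit multi-index, $J\le I$ for the componentwise order $j_k\le i_k$, $\bin IJ=\prod_{k=1}^m\bin{i_k}{j_k}$, $J!=\prod_k j_k!$ and $(\zz-\zz_0)^J=\prod_k(z_k-z_{0,k})^{j_k}$. The implication (i)$\Rightarrow$(ii) is immediate: let $\Phi$ be the very map $\partial^Is_i(\zz_0)\mapsto\partial^I\w s_i(\zz_0)$ supplied by (i), which is isometric by hypothesis. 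For each $k$ the defining rule (\ref{diffe}) gives, on the canonical jet frames,
$$\w P_k\Phi\big(\partial^Is_i(\zz_0)\big)=i_k\,\partial^{I-\epsilon_k}\w s_i(\zz_0)=\Phi\big(i_k\,\partial^{I-\epsilon_k}s_i(\zz_0)\big)=\Phi P_k\big(\partial^Is_i(\zz_0)\big),$$
where the middle term is understood to vanish when $i_k=0$. Hence $\Phi P_k=\w P_k\Phi$ for every $k$, which is (ii).

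The content is in (ii)$\Rightarrow$(i), which splits into an algebraic and an analytic step, and the algebraic step is where I expect the main difficulty. It is the several-variable analogue of Lemma~\ref{pascal}: I claim that a linear map $\Phi$ with $\Phi P_k=\w P_k\Phi$ for \emph{all} $1\le k\le m$ is, with respect to the canonical jet frames, a \emph{multivariate block-Pascal} matrix, i.e. there are $l\times l$ matrices $A_J$ ($0\le|J|\le n$) with
$$\Phi\big(\partial^Is_i(\zz_0)\big)=\sum_{J\le I}\bin IJ A_{I-J}\,\partial^J\w s_i(\zz_0),\qquad 0\le|I|\le n.$$
As in Lemma~\ref{pascal} I would argue by induction on the total degree $|I|$: the jets annihilated by every $P_k$ are exactly the degree-zero ones, which fixes $\Phi$ there and produces $A_0$; and comparing $\w P_k\Phi$ with $\Phi P_k$ on jets of degree $|I|+1$, for each $k$, determines the next coefficients through the multi-index Pascal identities, forcing the displayed form. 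The genuinely new feature compared with $m=1$ is that no single relation suffices: each $P_k$ alone only controls how $\Phi$ depends on the $k$-th index, so one must run the induction using all $m$ intertwining relations simultaneously in order to pin down the one family $\{A_J\}$ that governs every direction. The possible linear dependence among the vectors $\partial^Is_i(\zz_0)$ is handled exactly as in the well-definedness discussion preceding the statement, by reading $\Phi$ off on the canonical frame as in the block version of Lemma~\ref{pascal} (see Remark~\ref{33d}).

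For the analytic step I realize the blocks $A_J$ as Taylor data of a holomorphic change of frame. Set
$$A(\zz)=\sum_{|J|\le n}\frac{1}{J!}(\zz-\zz_0)^J A_J,$$
a matrix-valued holomorphic function near $\zz_0$ with $\partial^JA(\zz_0)=A_J$. Since the multivariate block-Pascal matrix is block-triangular for the degree filtration with every diagonal block equal to $A_0$, invertibility of the isometry $\Phi$ forces $A_0=A(\zz_0)$ to be invertible, so $\w{\textbf{t}}:=A(\zz)\w\sss$ is a holomorphic frame for $\w E$ on a sufficiently small neighborhood of $\zz_0$. The multivariable Leibniz rule then yields
$$\partial^I\w t_i(\zz_0)=\sum_{J\le I}\bin IJ\,\partial^{I-J}A(\zz_0)\,\partial^J\w s_i(\zz_0)=\sum_{J\le I}\bin IJ A_{I-J}\,\partial^J\w s_i(\zz_0)=\Phi\big(\partial^Is_i(\zz_0)\big),$$
so the map $\partial^Is_i(\zz_0)\mapsto\partial^I\w t_i(\zz_0)$ is exactly $\Phi$ and is therefore isometric. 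Thus the frames $\sss$ and $\w{\textbf{t}}$ witness (i), completing the equivalence.
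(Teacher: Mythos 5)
Your proposal is correct and follows exactly the route the paper intends: the paper leaves this proof to the reader as the ``technical'' several-variable extension of Proposition \ref{contin}, and your argument mirrors that one-variable proof, with the joint-intertwiner characterization for $P_1,\dots,P_m$ (the multivariate block-Pascal form, proved by induction on total degree) playing the role of Lemma \ref{pascal}, and the truncated Taylor polynomial $A(\zz)$ furnishing the new frame $\w{\textbf{t}}=A(\zz)\w\sss$ just as in the paper's step (ii)$\Rightarrow$(i). Your explicit verification that $A_0$ is invertible (so that $\w{\textbf{t}}$ is indeed a frame) is a detail the paper's one-variable proof leaves implicit; otherwise the two arguments coincide.
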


\quad

SCHOOL OF MATHEMATICS, SHANDONG UNIVERSITY, JINAN 250100, CHINA.

\footnotesize \emph{Email:} \textbf{ lchencz@sdu.edu.cn}
\end{document}